\documentclass{l4dc2025}

% The following packages will be automatically loaded:
% amsmath, amssymb, natbib, graphicx, url, algorithm2e

\title[Adaptive Koopman MPC]{Model Predictive Control of Nonlinear Dynamics\\ Using Online Adaptive Koopman Operators}
\usepackage{times}
\usepackage{graphicx}
\usepackage{amsmath, amssymb, cases, comment, color}

\newcommand{\tr}{\mathsf{T}}
\newtheorem{prob}{Problem}
\newtheorem{prop}{Proposition}
\newtheorem{thm}{Theorem}

% Use \Name{Author Name} to specify the name.
% If the surname contains spaces, enclose the surname
% in braces, e.g. \Name{John {Smith Jones}} similarly
% if the name has a "von" part, e.g \Name{Jane {de Winter}}.
% If the first letter in the forenames is a diacritic
% enclose the diacritic in braces, e.g. \Name{{\'E}louise Smith}

% Two authors with the same address
% \coltauthor{\Name{Author Name1} \Email{abc@sample.com}\and
%  \Name{Author Name2} \Email{xyz@sample.com}\\
%  \addr Address}

% Three or more authors with the same address:
% \coltauthor{\Name{Author Name1} \Email{an1@sample.com}\\
%  \Name{Author Name2} \Email{an2@sample.com}\\
%  \Name{Author Name3} \Email{an3@sample.com}\\
%  \addr Address}

% Authors with different addresses:
\author{%
 \Name{Daisuke Uchida} \Email{duchida@umich.edu}\vspace*{-1mm}
 %\addr Address 1
 \AND
 \Name{Karthik Duraisamy} \Email{kdur@umich.edu}\\
 \addr Department of Aerospace Engineering, University of Michigan, MI 48109, USA
}

\usepackage{setspace}
\setstretch{0.9}

\begin{document}

\maketitle

\begin{abstract}%
This paper develops a methodology for adaptive data-driven Model Predictive Control (MPC) using Koopman operators.
While MPC is ubiquitous in various fields of engineering, the controller performance can deteriorate if the modeling error between the control model and the true dynamics persists, which may often be the case with complex nonlinear dynamics. 
Adaptive MPC techniques learn models online such that the controller can compensate for the modeling error by incorporating newly available data. 
We utilize the Koopman operator framework to formulate an adaptive MPC technique  that corrects for model discrepancies in  a computationally efficient manner by virtue of  convex optimization.
With the use of neural networks to learn embedding spaces, Koopman operator models  enable accurate dynamics modeling. Such complex model forms, however, often lead to unstable online learning.
To this end, the proposed method utilizes the soft update of target networks, a technique used in stabilization of model learning in Reinforcement Learning (RL). 
Also, we provide a discussion on which parameters to be chosen as online updated parameters based on a specific description of linear embedding models.
Numerical simulations on a canonical nonlinear dynamical system show that the proposed method performs favorably compared to other data-driven MPC methods while achieving superior computational efficiency through the utilization of Koopman operators.

\end{abstract}

\begin{keywords}%
  Koopman Operator, Data-driven Control, Model Predictive Control
\end{keywords}

\section{Introduction}
\vspace{-2mm}
%Model Predictive Control (MPC) is widely used in various fields of engineering and has been successfully applied to a number of problems. 
%Since its optimization to compute the control input relies on the capability of a control model that predicts the behavior of the dynamics, ensuring sufficient accuracy of the model is of great importance.
%However, there often exists discrepancy between the control model and the true dynamics in case of complex systems, and adaptive or data-driven MPC methods take the effects of the modeling error into account for better control performance 
Model Predictive Control (MPC) has emerged as a powerful framework for solving complex control challenges across diverse engineering applications, from process industries to robotics and autonomous systems. At the heart of MPC lies its ability to optimize control decisions based on predicted future system behavior. This predictive capability critically depends on the accuracy of the underlying control model. Yet for complex systems, the mathematical models used for control often deviate from the true system dynamics. To address this limitation, researchers have developed adaptive and data-driven MPC approaches that explicitly account for modeling uncertainties, leading to more robust control performance
(\cite{adaptive_MPC_constrained_system, GP_based_periodic_error_correction_MPC, robust_constrained_learning_based_NMPC, learning_based_mpc_vision_based_robot}). 
For instance, the online adaptation procedure can be formulated such that the residual dynamics, which is the difference between the control model and the true dynamics, is learned online by fitting non-parametric or parametric function approximators such as Gaussian processes (\cite{GP_MPC}) and random Fourier features (\cite{RFF_MPC}).

Koopman operator theory has gained popularity in recent years for its utility as an alternative approach to describing nonlinear dynamics (\cite{Data_driven_book,Koopman_book,pan2024lifting}). 
Specifically, nonlinear dynamics can be represented as linear ones in the embedded space of feature maps, and several computational methods have been developed to obtain finite-dimensional approximations of Koopman operators, which are then utilized for prediction and control. 
The Koopman operator framework offers a potentially powerful advantage: it enables the transformation of nonlinear dynamical systems into linear representations through data-driven methods. This linearization affords access to the extensive theoretical machinery and computational tools developed for linear control systems, while preserving the ability to handle underlying nonlinear dynamics (\cite{KORDA_Koopman_MPC, MPC_for_PDE}). 
While Koopman operator-based models can be learned by linear regression types of methods such as Extended Dynamic Mode Decomposition (EDMD) (\cite{Williams2015}), utilization of neural networks to learn (in contrast to prescribing from a dictionary) feature spaces has been shown to be promising for complex nonlinear dynamics and incorporated into data-driven Koopman operator-based control (\cite{DeSKO,Deep_Koopman_vehicles,control_aware_Koopman,Physics-based_robabilistic_learning}).

While the use of Koopman operators enables expressive and flexible modeling for data-driven control,  modeling errors may arise due to several possible factors, e.g., lack of data quantity/quality, inadequate model structures, etc.
Whereas there are several methods to tackle this issue from the control theoretic viewpoints (\cite{tube_based_MPC, handling_plant_model_mismatch, data-driven_Koopman_H2}), most of them are based on EDMD-type models. 
Also, while \cite{DeSKO} develops a model uncertainty-aware Koopman MPC with the use of probabilistic neural networks and \cite{control_aware_Koopman} proposes a model refinement technique to handle the modeling error of neural network-based Koopman models in the context of control, there is a relative scarcity of exploration of online update methods of such control models.

In this paper, we propose an online adaptation method for Koopman operator-based MPC to avoid performance deterioration due to the modeling error.
Whereas \cite{online_DMD, online_DMD_fluid, online_DMD_power_system, online_DMD_EEG} explore the idea of online adaptation in the context of Koopman operator-based computational modeling, they do not assume controller design problems. 
\cite{online_DMD_seperation_control} presents an adaptive control of flow separation based on the online dynamic mode decomposition, which is an EDMD-type model without nonlinear feature maps. 
\cite{adaptive_Koopman_matrices} also develops an adaptive Koopman MPC, in which the linear operator $[A\ B]$ in \eqref{eq. linear embedding model; model dynamics in the embedded space} is updated to $[A+\Delta A\ B+\Delta B]$ s.t. $\Delta A$ and $\Delta B$ are parameterized by additional neural networks, which are trained online w.r.t. an adaptation loss function. 
On the other hand, the proposed method provides a more tractable yet  effective online adaptation procedure. 
We only use a single loss function to train neural networks throughout offline and online model learning, which results in fewer hyperparameters and less complexity of the model learning.
At the same time, the proposed method enables flexible online model learning since it allows an adaptation of the feature maps in addition to the linear operator $[A\ B]$.
Considering that model learning involving deep neural networks results in high-dimensional, non-convex optimizations and typically becomes unstable, we adopt the soft update of target networks (\cite{DDPG}), a common technique to stabilize learning used in Reinforcement Learning (RL). 
Also, we provide a discussion on which parameters should be prioritized in the online update procedure based on a specific formulation of linear embedding models in \cite{Koopman_form} to further improve the stability and robustness of online learning. The overview of the proposed method is shown in Fig. \ref{fig. concept}.

In Section \ref{sec. problem setup}, MPC with Koopman operator-based linear embedding models is presented. This is  followed by the formulation of offline model learning based on the Koopman operator framework in Section \ref{sec. offline learning}. The proposed method is formalized in Section \ref{sec. proposed adaptive Koopman MPC} and numerical evaluations are provided in Section \ref{sec. numerical example}.

\begin{figure}
    \centering
    \includegraphics[width=0.9\linewidth]{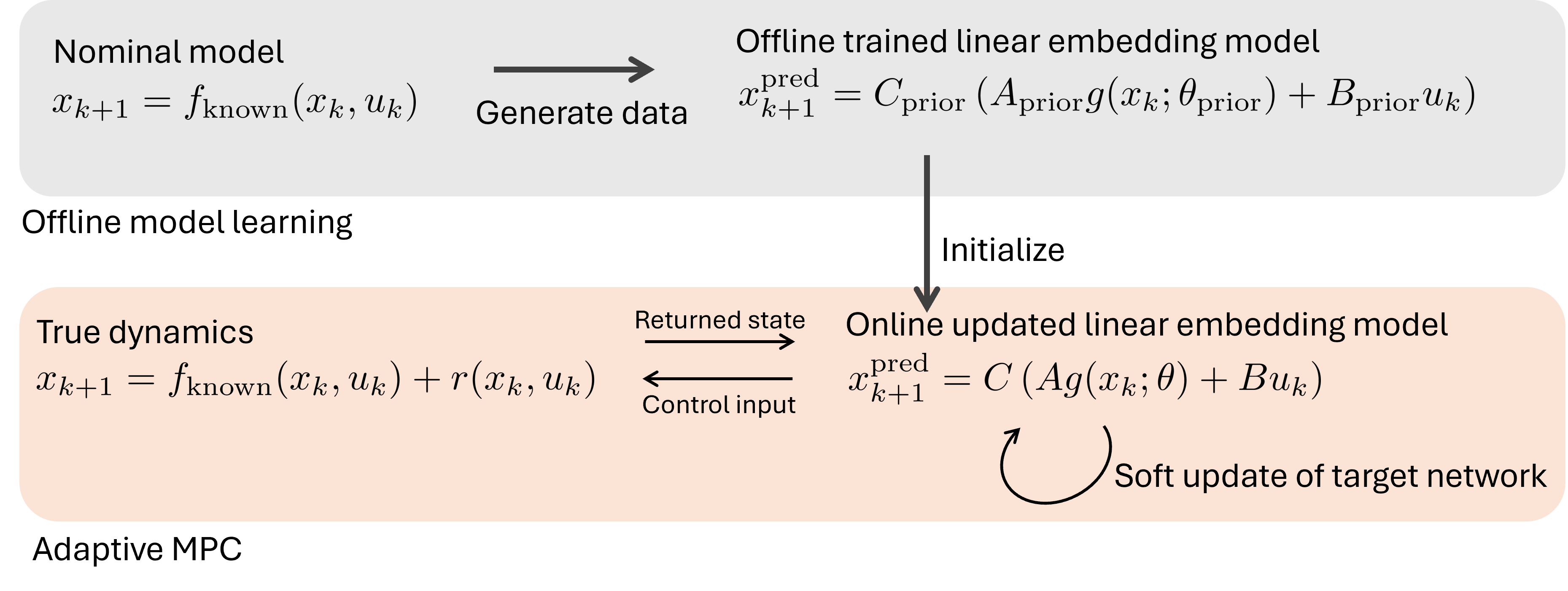}
    \caption{Proposed adaptive Koopman MPC with soft update. See Algorithm \ref{algorithm proposed method} for details.}
    \label{fig. concept}
\end{figure}

\vspace{-3mm}
\section{Problem Setup}
\vspace{-2mm}
\label{sec. problem setup}
\subsection{Model Predictive Control}
\vspace{-2mm}
We consider the problem of designing controllers for a discrete-time dynamics:
\vspace{-2mm}
\begin{align}
    x_{k+1}=f(x_k,u_k),
\end{align}
where $x_k\in \mathcal{X}\subseteq \mathbb{R}^n$ and $u_k\in \mathcal{U}\subseteq \mathbb{R}^p$ denote the state and the control input, respectively, and $f:\mathbb{R}^n\times \mathbb{R}^p\rightarrow \mathbb{R}^n$ is a possibly nonlinear mapping.
The control objective is supposed to minimize a quadratic cost $J$ at each time step, which is defined by
\vspace{-3mm}
\begin{align}
    J:=\sum_{k=0}^{H+1} 
    \left\{
        (x_k - x^\text{ref}_k)^\tr Q_\text{state} (x_k - x^\text{ref}_k) + u_k^\tr R u_k
    \right\},
    \label{eq. cost function w.r.t. the original state}
\end{align}
where $x_0$ denotes the state at the current time step, and $H$, $x^\text{ref}_k$, $Q_\text{state}$, and $R$ are a look-ahead horizon, a reference signal, and weight matrices w.r.t. the state and the control input, respectively.

It is assumed that while $f$ is unknown, we are given prior information about the dynamics in the form of nominal model $x_{k+1}=f_\text{known}(x_k,u_k)$ so that the true system is decomposed into:
\begin{align}
    x_{k+1} = f_\text{known}(x_k,u_k) + r(x_k,u_k),
    \label{eq. true dynamics decomposition}
\end{align}
where $r(x_k,u_k):=f(x_k,u_k) - f_\text{known}(x_k,u_k)$ is the residual dynamics.
This is a typical scenario in engineering applications such as robotics, and controller design under the unknown component $r$ is a problem that is actively being explored (\cite{safe_non_stochastic_control, neural_lander, safe_control_LTV}).

In MPC, the controller determines the optimal control input by minimizing a predefined cost function subject to the system's dynamic constraints as follows.
\begin{prob}
    \label{prob. general form of MPC}
    (Model Predictive Control with Quadratic Cost) \\
    \rm{}
    Given a current state $\xi_0$ of control model, apply the first element $u_0$ of the solution to the problem: 
    \begin{align}
    &\underset{u_0, u_1,\cdots, u_H}{\text{min}}
    \sum_{k=0}^{H+1} 
    \left\{
        (\xi_k - \xi^\text{ref}_k)^\tr Q (\xi_k - \xi^\text{ref}_k) + u_k^\tr R u_k
    \right\},
    &
    \label{eq. general MPC; quadratic cost}
\\
    &
    \text{subject to:}
    \ \ 
    \xi_{k+1}=\mathcal{F}(\xi_k,u_k,k),
    &
    \label{eq. general MPC; control model}
    \end{align}
    where a possibly time-varying mapping $\mathcal{F}:\mathbb{R}^N\times \mathbb{R}^p\times \mathbb{Z}_{\geq0}\rightarrow \mathbb{R}^N$ denotes a control model with the state $\xi_k$ and the control input $u_k$, and $\xi^\text{ref}_k$ is a reference signal. Weight matrices $Q$ and $R$ are assumed to be positive definite.
\end{prob}

\vspace{-2mm}
A simplest choice of the control model $\mathcal{F}$ is the nominal model so that $\xi_k\in \mathbb{R}^n$ ($N:=n$), $\mathcal{F}(\xi_k,u_k,k):=f_\text{known}(\xi_k,u_k)$, $\xi^\text{ref}_k:=x^\text{ref}_k$, and $Q:=Q_\text{state}$, in which case we call Problem \ref{prob. general form of MPC} \textit{nominal MPC}.
Since the cost function \eqref{eq. general MPC; quadratic cost} is quadratic, Problem \ref{prob. general form of MPC} becomes a convex optimization if the control model $\mathcal{F}$ is linear.
Note that the controller performance may be degraded due to the discrepancy between the control model $\mathcal{F}$ and the true dynamics $f$.

\vspace{-2mm}
\subsection{Linear Embedding Model}
\vspace{-2mm}
\label{sec. linear embedding model}
In the proposed method, we use a linear embedding model to derive the control model $\mathcal{F}$ for Problem \ref{prob. general form of MPC}. Given a state $x_k\in\mathcal{X}$ and a control input $u_k\in\mathcal{U}$ of the true dynamics \eqref{eq. true dynamics decomposition}, a linear embedding model outputs a predicted state $x^\text{pred}_{k+1}$ at the next time step s.t.
\vspace{-2mm}
\begin{numcases}{
    x^\text{pred}_{k+1} = C
    \left\{
        A g(x_k) + B u_k
    \right\}
    \Leftrightarrow 
}
    g^+ = A g(x_k) + B u_k,
    \label{eq. linear embedding model; model dynamics in the embedded space}
\\
    x^\text{pred}_{k+1} = C g^+,
    \label{eq. linear embedding model; decoder}
\end{numcases}
where $g:\mathcal{X}\rightarrow \mathbb{R}^N$ is a vector-valued function called feature maps and $A\in \mathbb{R}^{N\times N}$, $B\in \mathbb{R}^{N\times p}$, and $C\in \mathbb{R}^{n\times N}$ are matrices. 
The original state $x_k\in\mathbb{R}^n$ is first embedded onto a space $\mathbb{R}^N$ through the feature maps $g$, and the model state $g(x_k)$ is advanced by one step by the linear operator $[A\,B]$ to yield the prediction $g^+$ in \eqref{eq. linear embedding model; model dynamics in the embedded space}. 
The decoder $C$ then projects $g^+$ back onto the original state space $\mathbb{R}^n$ to make the state prediction $x^\text{pred}_{k+1}$ in \eqref{eq. linear embedding model; decoder}.
A sufficient condition for a linear embedding model to reconstruct the true dynamics is given by the following.

\begin{prop}
    \label{prop. conditions to make Koopman MPC aligned with original cost}
    \rm{}
    Consider a linear embedding model \eqref{eq. linear embedding model; model dynamics in the embedded space}, \eqref{eq. linear embedding model; decoder}.
    For a state-input triplet $(x_k,u_k,x_{k+1})$ of the true dynamics s.t. $x_k\in\mathcal{X},u_k\in\mathcal{U},x_{k+1}=f(x_k,u_k)$, a relation $x^\text{pred}_{k+1}=x_{k+1}$ holds if  
    \begin{align}
        &g(x_{k+1}) = A g(x_k) + B u_k,&
        \label{eq. condition on linear embedding model; invariance proximity}
    \\
        &x_{k+1} = Cg(x_{k+1}).&
        \label{eq. condition on linear embedding model; decoding error}
    \end{align}
\end{prop}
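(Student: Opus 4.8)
The plan is to prove this by direct substitution, chaining the two hypotheses \eqref{eq. condition on linear embedding model; invariance proximity} and \eqref{eq. condition on linear embedding model; decoding error} through the two equations \eqref{eq. linear embedding model; model dynamics in the embedded space} and \eqref{eq. linear embedding model; decoder} that define the linear embedding model. No auxiliary construction is needed; the entire argument is a matter of tracking which quantity equals which, so I would keep the bookkeeping explicit to avoid conflating the model's internal prediction with the lift of the true next state.

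First I would write the model prediction out in full. Combining \eqref{eq. linear embedding model; model dynamics in the embedded space} and \eqref{eq. linear embedding model; decoder}, the predicted state is $x^\text{pred}_{k+1} = C\{A g(x_k) + B u_k\} = C g^+$, where $g^+ := A g(x_k) + B u_k$ is the one-step propagation in the embedded space. Next I would invoke the first hypothesis: condition \eqref{eq. condition on linear embedding model; invariance proximity} asserts exactly that $g(x_{k+1}) = A g(x_k) + B u_k$, which is the very expression defining $g^+$. Hence $g^+ = g(x_{k+1})$, i.e. the linear update lands precisely on the feature map of the true next state $x_{k+1} = f(x_k, u_k)$. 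Finally I would apply the decoder to this identity: substituting $g^+ = g(x_{k+1})$ into $x^\text{pred}_{k+1} = C g^+$ gives $x^\text{pred}_{k+1} = C g(x_{k+1})$, and the second hypothesis \eqref{eq. condition on linear embedding model; decoding error} states $C g(x_{k+1}) = x_{k+1}$. Composing these equalities yields $x^\text{pred}_{k+1} = x_{k+1}$, which is the claim.

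As for the main obstacle, there is essentially none at the technical level: the result is a one-line consequence of substitution, and its value is conceptual rather than computational, since it isolates the two distinct error sources that a learned model must control, namely consistency of the linear propagation in the lifted space \eqref{eq. condition on linear embedding model; invariance proximity} and fidelity of the decoder \eqref{eq. condition on linear embedding model; decoding error}. The only point requiring care is to not identify $g^+$, the model's prediction in the embedded space, with $g(x_{k+1})$, the lift of the true next state, before the first hypothesis is applied; it is precisely \eqref{eq. condition on linear embedding model; invariance proximity} that equates them, and without it the substitution chain breaks at this step.
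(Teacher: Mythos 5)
Your proof is correct and is exactly the argument the paper intends: the paper's own proof is simply ``this follows directly from the definitions,'' and your substitution chain $x^\text{pred}_{k+1} = C g^+ = C g(x_{k+1}) = x_{k+1}$ is that argument written out explicitly, with the useful clarification that it is precisely \eqref{eq. condition on linear embedding model; invariance proximity} which identifies $g^+$ with $g(x_{k+1})$.
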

\vspace{-2mm}
\begin{proof}
    This follows directly from the definitions.
\end{proof}

\vspace{-2mm}
A major advantage of using a linear embedding model is its utility as a linear control model in the embedded space.
In Problem \ref{prob. general form of MPC}, consider a control model:
\vspace{-2mm}
\begin{align}
    \xi_{k+1} = A \xi_k + B u_k, \ \xi_0:=g(x_0)\in \mathbb{R}^N,
    \label{eq. Koopman control model}
\end{align}
with the reference signal and the weight matrix defined as $\xi^\text{ref}_k:=g(x^\text{ref}_k)$ and $Q:=C^\tr Q_\text{state}C$, respectively.
While the system \eqref{eq. Koopman control model} is no longer defined in the original state space $\mathcal{X}\subseteq \mathbb{R}^n$, an MPC solution in this setting still leads to an optimal control input w.r.t. $J$ in \eqref{eq. cost function w.r.t. the original state} on some conditions.
Specifically, if \eqref{eq. condition on linear embedding model; invariance proximity} and \eqref{eq. condition on linear embedding model; decoding error} hold for $\forall x_k\in\mathcal{X},\forall u_k\in\mathcal{U}$, a solution to Problem \ref{prob. general form of MPC} also minimizes the cost $J$ since 
\vspace{-2mm}
\begin{align}
    (\xi_k - \xi^\text{ref}_k)^\tr Q (\xi_k - \xi^\text{ref}_k)
    =
    (x_k - x^\text{ref}_k)^\tr Q_\text{state} (x_k - x^\text{ref}_k),
\end{align}
where $\xi_k=g(x_k)$ and $x_k=Cg(x_k)$ are substituted.

This class of MPC is often referred to as Koopman MPC in the literature since the linear operator $[A\, B]$ in \eqref{eq. linear embedding model; model dynamics in the embedded space} can be considered a finite-dimensional approximation of a Koopman operator (\cite{KORDA_Koopman_MPC}), as described in the next section.
MPC methods based on the Koopman operator formalism have  shown promise via their computational efficiency and control performance in various applications.
Specifically, Koopman MPC will result in faster execution times than most nonlinear MPC methods since the optimization becomes convex, and the validity of the linear control model \eqref{eq. Koopman control model} may be even established in case the true dynamics \eqref{eq. true dynamics decomposition} is nonlinear if the model parameters are learned appropriately (see the next section for details).
Also,  a linear embedding model can be computed in a fully data-driven manner, i.e., its model parameters can be determined by only using time-series data sampled from either the true dynamics or a simulator of a nominal model. 
In this paper, we refer to this type of controller design \textit{Koopman MPC}.
\vspace{-2mm}
\begin{prob}
    \label{prob. Koopman MPC}
    (Koopman MPC)\\
    \rm{}
    Given a current state $x_0$ and a linear embedding model \eqref{eq. linear embedding model; model dynamics in the embedded space}, \eqref{eq. linear embedding model; decoder}, solve Problem \ref{prob. general form of MPC} with the initial condition, the control model, and the weight matrix chosen as:
    \begin{align}
        &
        \xi_0:=g(x_0),
        &
    \end{align}
    \begin{align}
        &\xi_{k+1}=
        A \xi_k + B u_k,\text{ and}&
    %\end{align}
    %\begin{align}
    \\
        &
        Q:=C^\tr Q_\text{state} C.
        &
    \end{align}
\end{prob}

In the proposed method, we employ the Koopman MPC as the baseline method of the control strategy with the use of a nominal model $f_\text{known}$, and compensate for the modeling error using an online  update while retaining the advantages of linear embedding models.
\vspace{-2mm}
\section{Offline Learning of Linear Embedding Models}
\label{sec. offline learning}
\vspace{-2mm}
The model parameters of the linear embedding model \eqref{eq. linear embedding model; model dynamics in the embedded space},\eqref{eq. linear embedding model; decoder} are the feature maps $g:\mathcal{X}\rightarrow \mathbb{R}^N$ and matrices $A,B,$ and $C$. 
In this section, we formulate a problem of learning these parameters in an offline manner, where it is assumed that only data samples generated by a nominal model $f_\text{known}$ are available but we do not have access to data from the true system \eqref{eq. true dynamics decomposition}.
\vspace{-2mm}
\subsection{Koopman Operators}
\vspace{-2mm}
Koopman operators characterize the time evolution of dynamical systems by acting as composition operators on function spaces, enabling the analysis of discrete-time dynamics through functional transformations. These operators share a fundamental mathematical connection with the linear embedding matrices $[A,B]$, providing a theoretical foundation for transforming nonlinear dynamics into linear representations.
For instance, given  autonomous dynamics $x_{k+1}=f_a(x_k)$, $x_k\in\mathbb{R}^n$, and a function $g:\mathbb{R}^n\rightarrow \mathbb{R}$ s.t. $g\in \mathcal{G}$ where $\mathcal{G}$ is some function space, the Koopman operator $\mathcal{K}$ associated with this system is defined as $\mathcal{K}:\mathcal{G}\rightarrow \mathcal{G}:g\mapsto g\circ f_a$ on the assumption that $g\circ f_a\in \mathcal{G}$, $\forall g\in \mathcal{G}$.
This corresponds to time evolution of the dynamics $x_{k+1}=f_a(x_k)$ through the function $g$ since 
\vspace{-2mm}
\begin{align}
    g(x_{k+1}) = g(f_a(x_k)) = (g\circ f_a)(x_k) 
    \overset{\text{def.}}{=}
    (\mathcal{K}g)(x_k).
    \label{eq. description of autonomous dynamics with a Koopman operator}
\end{align}

Note that $g$ corresponds to a feature map in our formulation in Section \ref{sec. linear embedding model}.
As $\mathcal{K}$ is a composition operator, it is easily confirmed that Koopman operators are linear operators. 
A major difference between the two descriptions of the dynamics is that the time evolution of \eqref{eq. description of autonomous dynamics with a Koopman operator} is governed linearly by $\mathcal{K}$ whereas $f_a$ is possibly nonlinear, which naturally leads to an idea of deriving linear models using $\mathcal{K}$.
Noticing that Koopman operators are, however, infinite-dimensional in general since they are defined on function spaces, this is realized by a finite-dimensional version or its approximation of $\mathcal{K}$. 
An exact finite-dimensional version of the Koopman operator $\mathcal{K}$ exists if and only if we can find functions $g$ that span an invariant subspace, as shown in the following proposition. 
\begin{prop}
    \label{prop. invariance property of Koopman operators; autonomous}
    \rm{}
    Given $N$ functions $g_i\in \mathcal{G}$, $i=1,\cdots,N$, there exists a matrix $K\in\mathbb{R}^{N\times N}$ s.t. 
    \begin{align}
        \label{eq. finite dimensional version of Koopman operator}
        [\mathcal{K}g_1\cdots \mathcal{K}g_N]^\tr = K [g_1\cdots g_N]^\tr, 
    \end{align}
    if and only if $\text{span}(g_1,\cdots,g_N)$ is an invariant subspace under the action of $\mathcal{K}$, i.e., \\$\mathcal{K}g\in \text{span}(g_1,\cdots,g_N)$ for $\forall g\in \text{span}(g_1,\cdots,g_N)$.
\end{prop}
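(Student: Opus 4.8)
The plan is to prove the two implications separately by reading the matrix identity \eqref{eq. finite dimensional version of Koopman operator} component-wise and then invoking the linearity of $\mathcal{K}$ noted above. Write $V:=\text{span}(g_1,\ldots,g_N)$ for brevity. The key preliminary observation is that the stacked identity \eqref{eq. finite dimensional version of Koopman operator} is equivalent, row by row, to the scalar system $\mathcal{K}g_i=\sum_{j=1}^N K_{ij}g_j$ for each $i=1,\ldots,N$, so the whole argument reduces to relating this collection of identities on the generators to membership in $V$.

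For the \emph{only if} direction I would assume such a $K$ exists and read off $\mathcal{K}g_i=\sum_{j}K_{ij}g_j$, which places each $\mathcal{K}g_i$ in $V$. To upgrade this to invariance of all of $V$, I would take an arbitrary $g=\sum_i c_i g_i\in V$ and use linearity to write $\mathcal{K}g=\sum_i c_i\,\mathcal{K}g_i$, a linear combination of elements already known to lie in $V$, hence $\mathcal{K}g\in V$.

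For the \emph{if} direction I would start from invariance, which in particular forces $\mathcal{K}g_i\in V$ for every generator $g_i$ (since each $g_i\in V$). By the definition of $V$ as a span, each such $\mathcal{K}g_i$ admits an expansion $\mathcal{K}g_i=\sum_j K_{ij}g_j$ for some scalars $K_{ij}$; assembling these coefficients into the matrix $K=(K_{ij})$ reproduces exactly \eqref{eq. finite dimensional version of Koopman operator}.

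The content is elementary linear algebra, so I do not expect a genuine obstacle; the only points demanding care are bookkeeping ones. First, I must keep the transpose and index conventions consistent so that the rows of $K$ correspond to the expansion coefficients of the $\mathcal{K}g_i$. Second, I would explicitly note that linear independence of the $g_i$ is never needed: the invariance hypothesis guarantees \emph{existence} of the coefficients $K_{ij}$, and in the linearly dependent case these coefficients (and thus $K$) need not be unique, but the statement asserts only existence. The single genuinely non-syntactic step is recognizing that invariance on the full subspace $V$ is equivalent to the much weaker-looking condition $\mathcal{K}g_i\in V$ on the finite generating set, an equivalence that rests entirely on the linearity of $\mathcal{K}$.
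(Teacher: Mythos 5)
Your proof is correct and complete. Note that the paper itself supplies no argument for this proposition---it simply defers to the reference \cite{control_aware_Koopman}---so there is no in-paper proof to compare against; your argument (row-wise reading of the matrix identity, linearity of $\mathcal{K}$ for the only-if direction, existence of expansion coefficients from the span for the if direction) is the standard one, and your remark that linear independence of the $g_i$ is never needed---since only existence, not uniqueness, of $K$ is asserted---is exactly the right point of care.
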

%\vspace{-2mm}
\begin{proof}
    For instance, see \cite{control_aware_Koopman}.
\end{proof}

In practice, finding an invariant subspace is not trivial, and a finite-dimensional approximation of $\mathcal{K}$ is computed from data samples. 
Extended Dynamic Mode Decomposition (EDMD) (\cite{Williams2015}) is one  such method using linear regression and user-specified feature maps $g_i$.
While EDMD enables simple and tractable model training, it may not be adequately  expressive  for complex nonlinear systems.
On the other hand, joint learning of $g_i$ and $K$ can achieve more accurate models since $g_i$ is also learned on the training data along with the matrix $K$, and parameterizing $g_i$ by neural networks has been shown to be successful in a wide range of problems (\cite{Deep_learning_universal_linear_embeddings, Learning_Koopman_Invariant_Subspaces, Physics-based_robabilistic_learning}).

For general non-autonomous dynamics \eqref{eq. true dynamics decomposition} with the control input, corresponding Koopman operators are defined in a similar manner but with a sequence of control inputs included (\cite{KORDA_Koopman_MPC}). 
For the space of input sequences:
$
	l(\mathcal{U}):=
	\{
		(u_0,u_{1},\cdots)\mid u_k\in \mathcal{U}, \forall k
	\}
$, 
consider a mapping $\hat{f}:\mathcal{X}\times l(\mathcal{U})\rightarrow \mathcal{X}\times l(\mathcal{U}):(x,(u_0,u_1,\cdots))\mapsto (f(x,u_0), (u_1,u_2,\cdots))$.
Also, let $\hat{g}:\mathcal{X}\times l(\mathcal{U})\rightarrow \mathbb{R}$ be a function from an extended space $\mathcal{X}\times l(\mathcal{U})$ to $\mathbb{R}$.
Then, the Koopman operator associated with \eqref{eq. true dynamics decomposition} is defined as a linear operator $\mathcal{K}:\hat{\mathcal{G}}\rightarrow \hat{\mathcal{G}}:\hat{g}\mapsto \hat{g}\circ \hat{f}$ s.t. $\hat{\mathcal{G}}$ is a function space to which $\hat{g}$ belongs and the dynamics along with a sequence $(u_k,u_{k+1},\cdots)$ of control inputs is represented by
\vspace{-2mm}
\begin{align}
	\hat{g}(x_{k+1},(u_{k+1},u_{k+2},\cdots))&=
	(\hat{g}\circ \hat{f})(x_k,(u_k,u_{k+1},\cdots))=
	(\mathcal{K}\hat{g})(x_k,(u_k,u_{k+1},\cdots)).
	&\
\end{align}

It can be easily verified that Proposition \ref{prop. invariance property of Koopman operators; autonomous} also holds for the non-autonomous case.
If we consider the following $N+p$ functions $\hat{g}_i$ of specific forms:
\vspace{-2mm}
\begin{align}
	&[\hat{g}_1(x_k,(u_k,u_{k+1},\cdots))\cdots \hat{g}_{N+p}(x_k,(u_k,u_{k+1},\cdots))]^\tr
	=[g_1(x_k)\cdots g_{N}(x_k)\ u_k^\tr]^\tr, 
	&
\end{align}
the first $N$ rows of \eqref{eq. finite dimensional version of Koopman operator} reads
$g(x_{k+1}) = A g(x_k) + Bu_k$, where $[A\ B]\in \mathbb{R}^{{N}\times (N+p)}$ denotes the first $N$ rows of $K$.
Therefore, the condition \eqref{eq. condition on linear embedding model; invariance proximity} is ensured by choosing $[A\,B]$ as a finite-dimensional Koopman operator acting on an invariant subspace.
As is the case with the autonomous setting, finding such a subspace is not trivial in practice and a finite-dimensional approximation may be computed as $[A\,B]$ by either EDMD-type methods or joint learning of $g_i$ and $[A\,B]$.
\vspace{-2mm}
\subsection{Offline Learning Procedure}
\vspace{-2mm}
In the proposed method, we adopt the joint learning of feature maps $g_i$ and matrices $A$, $B$, and $C$ with the use of neural networks, which is formulated as follows.
\begin{prob}
        (Offline Learning Using Nominal Model)\\
	\rm{}
	Let $g(\cdot;\theta):\mathcal{X}\rightarrow \mathbb{R}^{N}$ be a neural network characterized by parameters $\theta$. 
	Find $\theta$, $A\in \mathbb{R}^{N\times N}$, $B\in \mathbb{R}^{N\times p}$, and $C\in \mathbb{R}^{n\times N}$ that minimize the loss function:
    \vspace{-2mm}
	\begin{align}
		&\sum_{i} 
        \left(
		\lambda_1
		\left\|
			Ag(x_i;\theta) + Bu_i - g(y_i;\theta)
		\right\|_2^2
		+
		\lambda_2
		\left\|
		C(Ag(x_i;\theta) + Bu_i) - y_i
		\right\|_2^2
        \right)
        ,&
		\label{eq. loss function}
	\end{align}
	where the data set is given in the form $\mathcal{D}:=\{ (x_i,u_i,y_i)\mid y_i=f_\text{known}(x_i,u_i) \}$ and $\lambda_1,\lambda_2\in \mathbb{R}$ are hyperparameters.
	\label{prob. Offline Model Learning}
\end{prob}

The first and second terms of the loss function \eqref{eq. loss function} are responsible for (approximately) ensuring the conditions \eqref{eq. condition on linear embedding model; invariance proximity} and \eqref{eq. condition on linear embedding model; decoding error}, respectively. 

\vspace{-3mm}
\subsection{Data Generation Using MPC Simulations}
\vspace{-2mm}
In Problem \ref{prob. Offline Model Learning}, how the control inputs $u_i$ in the dataset $\mathcal{D}$ are generated determines the quality of data and therefore has a significant influence on the learning results.
A typical strategy is sampling both states and inputs from some distributions assuming that $x_i,u_i$ are i.i.d. random variables, in which case the loss function \eqref{eq. loss function} will converge to a more general characteristic such as an $L_2$ norm of the modeling error as the number of data samples tends to infinity (e.g., \cite{control_aware_Koopman}).
However, it is challenging to sample  the product space $\mathcal{X}\times \mathcal{U}$ adequately unless the dimensions of the state and the control input are sufficiently small, and this sampling strategy may not necessarily result in accurate and unbiased model predictions in practice.

As an alternative approach, we utilize MPC simulations of the nominal model $f_\text{known}$ to generate data samples. 
Specifically, the dataset $\mathcal{D}$ in Problem \ref{prob. Offline Model Learning} consists of a collection of trajectories $(x_0,u_0,x_1,u_1,\cdots)$ s.t. $x_0$ is randomly sampled, and $u_k$ and $x_{k+1}$ are a solution of the nominal MPC (Problem \ref{prob. general form of MPC} with $\mathcal{F}(\xi_k,u_k,k):=f_\text{known}(\xi_k,u_k)$) and its corresponding next state $x_{k+1}=f_\text{known}(x_k,u_k)$, respectively. 
The main intent is to selectively learn relevant regimes of dynamics by the use of MPC simulations of the nominal model so that the controller performance will be improved.
A similar approach is also employed in \cite{Koopman_legged_robots}.
\vspace{-3mm}
\section{Adaptive Koopman MPC}
\vspace{-2mm}
\label{sec. proposed adaptive Koopman MPC}
While the Koopman MPC with a linear embedding model learned by Problem \ref{prob. Offline Model Learning} is expected to perform well if the nominal model $f_\text{known}$ is sufficiently close to the true dynamics, 
updating the control model online will further improve the performance or foster robustness in case there exists a large discrepancy between the nominal model and the true dynamics.

Let $\Theta:=\{ A,B,C,\theta \}$ be the model parameters of a linear embedding model \eqref{eq. linear embedding model; model dynamics in the embedded space}, \eqref{eq. linear embedding model; decoder}.
Also, we use a notation $\text{LinearMPCSolver}(g(x_k);\Theta)$ to denote a solution of the Koopman MPC (Problem \ref{prob. Koopman MPC}) given a current state $x_k$.
Assuming that a new data sample $x_{k+1}$ is available at time $k+1$ s.t. $x_{k+1}=f(x_k,u_k)$ where $u_k=\text{LinearMPCSolver}(g(x_k);\Theta)$, we add $x_{k+1}$ to a relay buffer, from which a data batch is sampled at each time step to update $\Theta$ in an online manner. 
Similar to Problem \ref{prob. Offline Model Learning}, the model parameters are updated online by minimizing \eqref{eq. loss function}.

It is, however, known that using neural networks as function approximators often makes learning processes unstable due to its non-convexity and high-dimensionality.
To this end, the proposed method adopts the soft update of target networks (\cite{DDPG}).
A target network is paired with a main network and its parameters are slowly updated towards that of the main network, by which abrupt changes in the outputs will be avoided to enhance the stability of the learning process.

In the proposed method, we initialize both the main and the target networks (two linear embedding models with the model parameters labeled $\Theta$ and $\Theta_\text{target}$, respectively) by the offline model from Problem \ref{prob. Offline Model Learning}. 
The target network is then updated at each time step by an interpolation $\Theta_\text{target}\leftarrow \tau\Theta+(1-\tau)\Theta_\text{target}$, where $\tau$ is a hyperparameter to adjust the smoothness of the update.
The actual control input is computed by the Koopman MPC solver with the slowly changing model parameters $\Theta_\text{target}$ so that we can suppress undesirable behavior of the closed-loop dynamics due to large fluctuations of the control model. 
The proposed \textit{Adaptive Koopman MPC with Soft Update} is summarized in Algorithm \ref{algorithm proposed method}.

\begin{algorithm2e}[t]
        \label{algorithm proposed method}
	\caption{Adaptive Koopman MPC with Soft Update}
		\textbf{Require:} Prior Model or simulator $f_\text{known}$ of known dynamics\;
            \vspace{2mm}
		 \textbf{Step 1: Train an offline model}\; 
             Simulate $f_\text{known}$ and collect data:\\ $\mathcal{D}_\text{known}=\{ (x,u,y)\mid y=f_\text{known}(x,u), u=\text{MPCSolver}(x;f_\text{known}) \}$\; 
            \vspace{2mm}
            Solve Problem \ref{prob. Offline Model Learning} to train a linear embedding model:
            \\ $x^\text{pred}_{k+1}=C_\text{prior} (A_\text{prior}g_\text{prior}(x_{k};\theta_\text{prior})+B_\text{prior}u_k)$ on the data set $\mathcal{D}_\text{known}$\; 
            \vspace{3mm}
            \textbf{Step 2: Adaptive Data-driven MPC}\; 
            $(A,B,C,\theta)\leftarrow (A_\text{prior},B_\text{prior},C_\text{prior},\theta_\text{prior})$  \hspace{3mm}\tcp{\textcolor{blue}{Main network parameters}}
            $(A_\text{target},B_\text{target},C_\text{target},\theta_\text{target})
            \hspace{-1mm}\leftarrow\hspace{-1mm} 
            (A_\text{prior},B_\text{prior},C_\text{prior},\theta_\text{prior})$ 
            \hspace{0mm}\tcp{\textcolor{blue}{Target network params}}
            $(\Theta, \Theta_\text{target})\leftarrow (\{ A,B,C,\theta \}, \{ A_\text{target},B_\text{target},C_\text{target},\theta_\text{target} \})$\;  
            $\mathcal{B}\leftarrow \emptyset$  
            \hspace{3mm}\tcp{\textcolor{blue}{Replay buffer}}
            $x_0\leftarrow $Initial condition\; 
            \vspace{3mm}
	    \For{$k=0,1,2,\cdots$}{
            $u_k\leftarrow \text{LinearMPCSolver}(g(x_k); \Theta_\text{target})$ 
            \hspace{3mm}\tcp{\textcolor{blue}{Koopman MPC}} 
            $x_{k+1}\leftarrow f(x_k,u_k)$  
            \hspace{3mm}\tcp{\textcolor{blue}{Next state from the true environment}}
            $\mathcal{B}\leftarrow \mathcal{B}\cup \{ (x_k,u_k,x_{k+1}) \}$ 
            \hspace{3mm}\tcp{\textcolor{blue}{Add new data to replay buffer}}
            $\mathcal{D}_\text{online}\leftarrow \text{BatchSample}(\mathcal{B})$\; 
            $\Theta \leftarrow \text{GradientDescent}(\mathcal{D}_\text{online})$\; 
            $\Theta_\text{target}\leftarrow \tau \Theta + (1-\tau)\Theta_\text{target}$
            \hspace{3mm}\tcp{\textcolor{blue}{Soft update}}
            }
\end{algorithm2e}
%\vspace{-5mm}
\vspace{-4mm}
\subsection{Parameter Selection for Online Update}
\vspace{-2mm}
\label{sec. parameter selection based on Koopman form}
While the online update can be applied to all the parameters in $\Theta$, it is advisable to only select ones that have dominant and essential effects on the control performance to improve both the computational efficiency and the robustness of the algorithm.
For instance, the number of  parameters in $\Theta$ can become quite large for complex and/or large-scale dynamics, and updating all the parameters may lead to an undesirable computational burden for online updates.
Also, the stability of the online update can be further improved by excluding parameters that are sensitive to model outputs but not meaningful in terms of control performance.
To this end, we propose to exclude the matrix $A$ from online updated parameters if the online learning becomes unstable, which is suggested by the following property of linear embedding models.
\vspace{-1mm}
\begin{thm}
    \label{prop. Koopman form}
    \rm{}
    \hspace{-3mm}(\cite{Koopman_form})\\
    Suppose that $\mathcal{X}$ and $\mathcal{U}$ are convex sets and $0\in\mathcal{U}$. For a linear embedding model \eqref{eq. linear embedding model; model dynamics in the embedded space}, \eqref{eq. linear embedding model; decoder} s.t. $g_i\in C^1$ for $\forall i$ and $\text{span}(g_1,\cdots,g_N)$ is an invariant subspace under the action of the Koopman operator associated with $f(\cdot,0)$, the following holds:
    \vspace{-2mm}
    \begin{align}
		g(x_{k+1})
		=
		%(\mathcal{K}_0g)(x_k)
            A g(x_k)
		+
            \underset{=:\hat{B}(x_k,u_k)}
            {
            \underbrace{
            %\left\{
            \int_{0}^{1} \cfrac{\partial \mathcal{B}}{\partial u} (x_k, \lambda u_k)
		d\lambda 
            %\right\}
            }}
		\ u_k
	,
	\label{eq. general separate form with Beta relation and A part}
\end{align}
\vspace{-4mm}
    where 
\begin{align}
        %&
        %\hat{f}(x,u):=
        %f(x,u) - f(x,0),
        %&
%\\
	&\mathcal{B}(x,u):=
	\left\{
	\int_{0}^{1} \cfrac{\partial g}{\partial x}(f(x,0) + \lambda (f(x,u) - f(x,0)))
	d\lambda\right\}
	(f(x,u) - f(x,0)),&
	\label{eq. Beta}
\end{align}
and $\mathcal{B}(x,u)$ is assumed to be differentiable in $u$.
\end{thm}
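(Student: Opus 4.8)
The plan is to assemble \eqref{eq. general separate form with Beta relation and A part} from two nested applications of the fundamental theorem of calculus (FTC), with the convexity hypotheses entering only to keep the integration paths inside the admissible sets. First I would split off the zero-input (autonomous) part. Because $\text{span}(g_1,\dots,g_N)$ is invariant under the Koopman operator $\mathcal{K}$ of $f(\cdot,0)$, Proposition \ref{prop. invariance property of Koopman operators; autonomous} furnishes the matrix $A$ with $(\mathcal{K}g_i)(x)=\sum_j A_{ij}g_j(x)$; since $(\mathcal{K}g_i)(x)=g_i(f(x,0))$ by definition of the composition operator, in vector form this is $g(f(x,0))=Ag(x)$. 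Writing $x_{k+1}=f(x_k,u_k)$ and adding and subtracting $g(f(x_k,0))$ gives
\begin{align}
    g(x_{k+1}) = Ag(x_k) + \big[\, g(f(x_k,u_k)) - g(f(x_k,0)) \,\big],
    \nonumber
\end{align}
reducing the claim to identifying the bracketed control term.

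Next I would evaluate the bracket by the FTC along the line segment from $f(x_k,0)$ to $f(x_k,u_k)$. Parameterizing this segment as $f(x_k,0)+\lambda(f(x_k,u_k)-f(x_k,0))$ and using $g\in C^1$, the chain rule gives $\tfrac{d}{d\lambda}g(\cdot)=\tfrac{\partial g}{\partial x}(\cdot)\,(f(x_k,u_k)-f(x_k,0))$, and integrating over $\lambda\in[0,1]$ reproduces exactly $\mathcal{B}(x_k,u_k)$ from \eqref{eq. Beta}. Convexity of $\mathcal{X}$ is precisely what makes this legitimate: it ensures the segment joining the two states $f(x_k,0)$ and $f(x_k,u_k)=x_{k+1}$ stays in $\mathcal{X}$, where $g$ is differentiable. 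At this point one has $g(x_{k+1})=Ag(x_k)+\mathcal{B}(x_k,u_k)$.

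The final step factors $u_k$ out of $\mathcal{B}(x_k,u_k)$. I would first note $\mathcal{B}(x_k,0)=0$, since the trailing factor $f(x_k,u_k)-f(x_k,0)$ vanishes at $u_k=0$. Invoking the standing assumption that $\mathcal{B}$ is differentiable in $u$, together with convexity of $\mathcal{U}$ and $0\in\mathcal{U}$ (so that $\lambda u_k\in\mathcal{U}$, and hence $\mathcal{B}(x_k,\lambda u_k)$ is defined, for all $\lambda\in[0,1]$), a second FTC along $\lambda\mapsto\mathcal{B}(x_k,\lambda u_k)$ yields
\begin{align}
    \mathcal{B}(x_k,u_k) = \mathcal{B}(x_k,u_k) - \mathcal{B}(x_k,0)
    = \left( \int_0^1 \frac{\partial \mathcal{B}}{\partial u}(x_k,\lambda u_k)\, d\lambda \right) u_k
    = \hat{B}(x_k,u_k)\, u_k,
    \nonumber
\end{align}
which combined with the previous paragraph is exactly \eqref{eq. general separate form with Beta relation and A part}.

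The arithmetic is routine once the two segments are set up, so I expect the main obstacle to be bookkeeping about domains and regularity rather than any sharp estimate. One must verify that $f(x_k,0)$ as well as $x_{k+1}$ lie in $\mathcal{X}$ so the first segment integral is meaningful, and check that the explicit hypotheses, namely $g\in C^1$ and differentiability of $\mathcal{B}$ in $u$, are exactly what licenses the chain rule and the differentiation inside each FTC step. The roles of the convexity of $\mathcal{X}$ and $\mathcal{U}$ and of the condition $0\in\mathcal{U}$ are then transparent: they keep the two integration paths within the admissible sets on which $g$ and $\mathcal{B}$ are controlled.
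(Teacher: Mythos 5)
Your proof is correct: the decomposition into the autonomous Koopman part $g(f(x,0))=Ag(x)$ (via Proposition \ref{prop. invariance property of Koopman operators; autonomous} and the invariance hypothesis) followed by two applications of the fundamental theorem of calculus --- first along the segment from $f(x_k,0)$ to $f(x_k,u_k)$ in $\mathcal{X}$ to produce $\mathcal{B}(x_k,u_k)$, then along $\lambda\mapsto\mathcal{B}(x_k,\lambda u_k)$ in $\mathcal{U}$ to factor out $u_k$ --- is exactly the argument behind this result. Note that the paper itself gives no proof, deferring entirely to the cited reference (\cite{Koopman_form}); your reconstruction matches that reference's standard derivation, including the correct identification of where each hypothesis (convexity of $\mathcal{X}$ and $\mathcal{U}$, $0\in\mathcal{U}$, $g_i\in C^1$, differentiability of $\mathcal{B}$ in $u$) is used.
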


Equation \eqref{eq. general separate form with Beta relation and A part} implies that there exists a linear embedding model with no modeling error s.t. $A$ is a constant matrix while $B$ is given as a time-varying one $\hat{B}(x_k,u_k)$. 
Specifically, if we can find a finite-dimensional Koopman operator associated with the drift term of the dynamics, it is sufficient to update only $B$ appropriately at every time step to reconstruct the true dynamics. 
Whereas there is no guarantee in general that the learning results of Problem \ref{prob. Offline Model Learning} or the online update satisfy the assumptions in Theorem \ref{prop. Koopman form}, we heuristically find that excluding $A$ from the online updated parameters while updating $B$ and $g_i$ online improves control performance and stabilizes learning in many cases.
\vspace{-6mm}
\section{Numerical Example}
\vspace{-2mm}
\label{sec. numerical example}
As a numerical example, we consider a cartpole system with a cart mass $m_c$, a pole mass $m_p$, and a pole length $2l$, which is described by the following ordinary differential equations (ODE) (\cite{safe_control_gym}):
\vspace{-2mm}
\begin{align}
         \ddot{x}(t)
    =\cfrac{
        F(t) + m_pl(\dot{\theta}^2(t)\sin \theta(t) - \ddot{\theta}(t)\cos\theta(t) )
    }{m_c + m_p}, 
    \label{eq. cartpole 1}
\end{align}
\begin{align}
          \ddot{\theta}(t)
    =\cfrac{
        g\sin\theta(t) + \cos\theta(t) 
        \left(
            \frac{
                -F-m_p l \dot{\theta}^2(t) \sin\theta(t) 
            }{m_c + m_p}
        \right) 
    }{
        l
        \left(
            \frac{4}{3} - 
            \frac{m_p\cos^2\theta(t)}{m_c+m_p}
        \right) 
    }, 
    \label{eq. cartpole 2}
\end{align}
\begin{comment}
\begin{align}
    \left\{
    \begin{array}{l}
         \ddot{x}(t)
    =\cfrac{
        F(t) + m_pl(\dot{\theta}^2(t)\sin \theta(t) - \ddot{\theta}(t)\cos\theta(t) )
    }{m_c + m_p}, 
    \\
          \ddot{\theta}(t)
    =\cfrac{
        g\sin\theta(t) + \cos\theta(t) 
        \left(
            \frac{
                -F-m_p l \dot{\theta}^2(t) \sin\theta(t) 
            }{m_c + m_p}
        \right) 
    }{
        l
        \left(
            \frac{4}{3} - 
            \frac{m_p\cos^2\theta(t)}{m_c+m_p}
        \right) 
    }, 
    \end{array}
    \right. 
    \label{eq. cartpole}
\end{align}
\end{comment}
where $x(t)$, $\theta(t)$, $F(t)$, and $g$ are the cart position, the pole angle, the force to the cart, and the acceleration due to gravity, respectively. 
The state of the system is supposed to be $x_k:=[x(k\Delta t)\ \dot{x}(k\Delta t)\ \theta(k\Delta t)\ \dot{\theta}(k\Delta t)]^\tr$ s.t. $x(t)$, $\dot{x}(t)$, $\theta(t)$, and $\dot{\theta}(t)$ are sampled with a sampling period $\Delta t=1/15$ [s].
Also, $F(t)$ is given by the control input $u_k$ determined by MPC s.t. $F(t):=u_k$ for $k\Delta t\leq t\leq (k+1)\Delta t$.
The reference signal is set to $x^\text{ref}_k:=0.$

It is assumed that we are given the ODE \eqref{eq. cartpole 1}, \eqref{eq. cartpole 2} with $(m_c, m_p, l)=(0.75, 0.075, 0.375)$ as a nominal model, whereas the true dynamics is governed by the same equations with different parameter values: $(m_c, m_p, l)=(1,\ 0.1,\ 0.5)$.
In Step 1 of Algorithm \ref{algorithm proposed method}, we collect data from the nominal model that consists of 500 trajectories, each of which has a length of 60 time steps, and train a linear embedding model with the feature maps given by $g(x_k;\theta):=[x_k^\tr\ g_5(x_k;\theta)\ g_6(x_k;\theta)]^\tr$, where $[g_5(\cdot;\theta)\ g_6(\cdot;\theta)]$ is a feed-forward neural network with three hidden layers, each of which has 64 neurons.
Note that including the state $x_k$ itself in the feature maps eliminates the decoding error and ensures the condition \eqref{eq. condition on linear embedding model; decoding error} by an analytical decoder $C:=[I_n\ 0]$. Therefore, we set $\lambda_2=0$ in \eqref{eq. loss function}.
Following the discussion in Section \ref{sec. parameter selection based on Koopman form}, we fix $A$ to the learning result of Problem \ref{prob. Offline Model Learning} and update $g$ and $B$ online with $\tau:=0.05$.

For comparison, we also test three controllers in addition to the proposed method. 
As a baseline of non-adaptive MPC, we consider the nominal MPC, which is described in Section \ref{sec. problem setup}.
As data-driven adaptive MPC, the Gaussian Process MPC (\textit{GP-MPC}) (\cite{GP_MPC}) and the MPC with random Fourier Features (\textit{RFF-MPC}) (\cite{RFF_MPC}) are considered.
GP-MPC and RFF-MPC learn the residual dynamics $r(x_k,u_k)$ in \eqref{eq. true dynamics decomposition} with sparse Gaussian processes and random Fourier features in online manners, respectively. 
For all the MPC methods, we set $H:=20$, $Q_\text{state}:=\text{diag}(5,0.1,5,0.1)$, and $R:=0.1$.
For each controller, a simulation with the same setting is performed 10 times with randomly chosen initial conditions. The simulations are implemented with safe control gym (\cite{safe_control_gym}) on a system with an AMD Ryzen 7 7700X 8-core processor and 32 GB of memory.

\begin{figure}[b]
	\centering
	\subfigure[Sample trajectories.]{
            \includegraphics[width=0.4\linewidth]{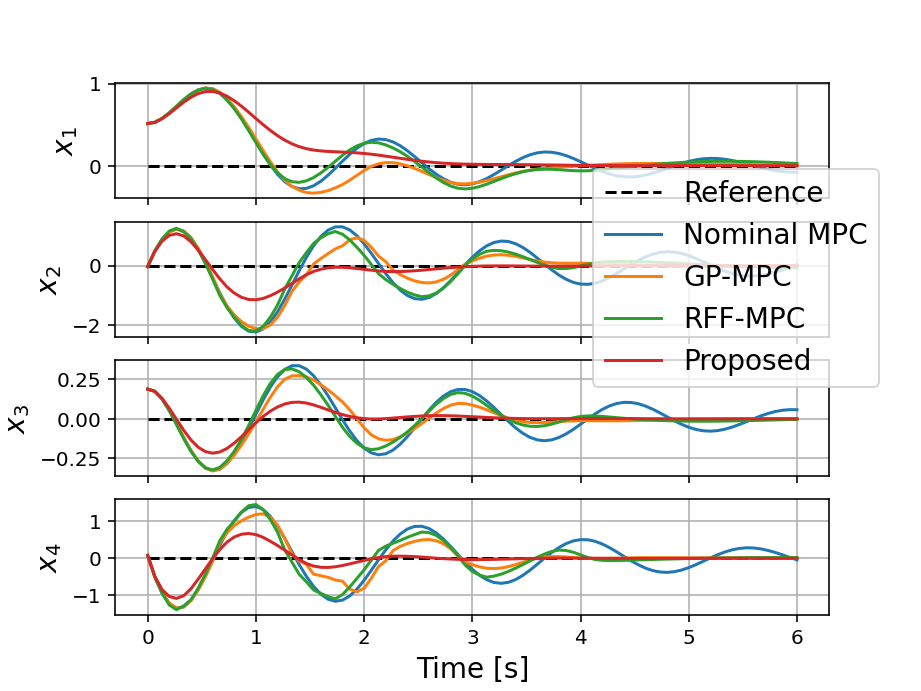}
        }
        \subfigure[Average errors.]{
            \includegraphics[width=0.4\linewidth]{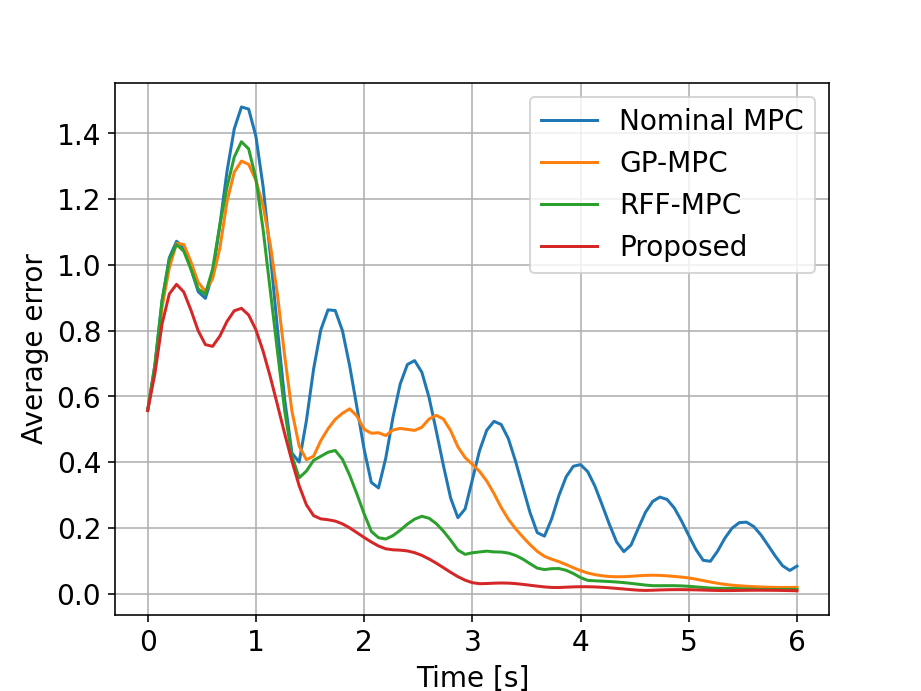}
        }
        \caption{Results of the cartpole system.}
        \label{fig. sample traj and error}
\end{figure}
\begin{figure}[b]
	\centering
	\subfigure[10 \% difference.]{
            \includegraphics[width=0.3\linewidth]{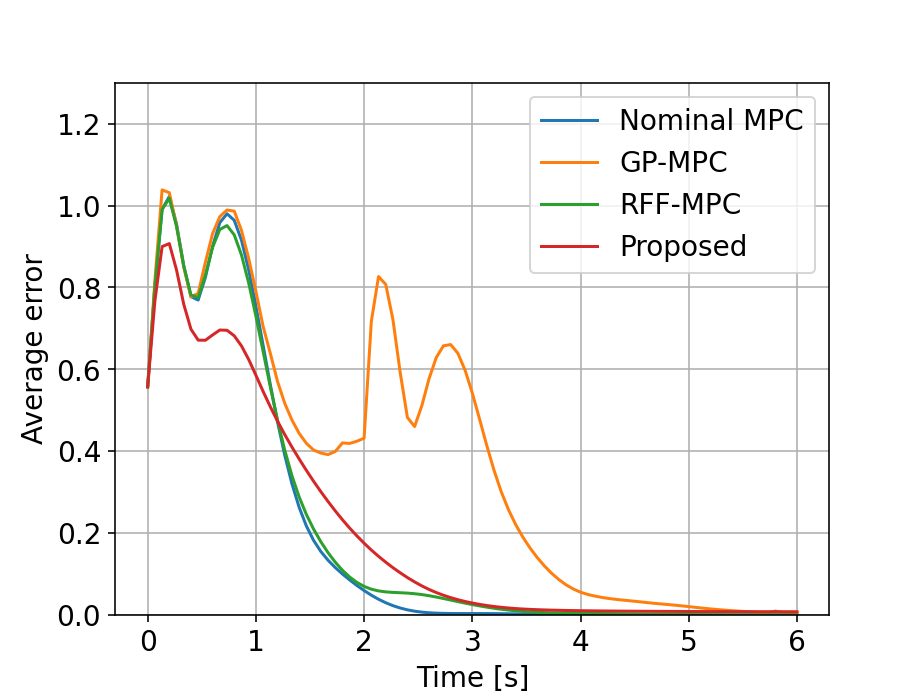}
        }
        \subfigure[20 \% difference.]{
            \includegraphics[width=0.3\linewidth]{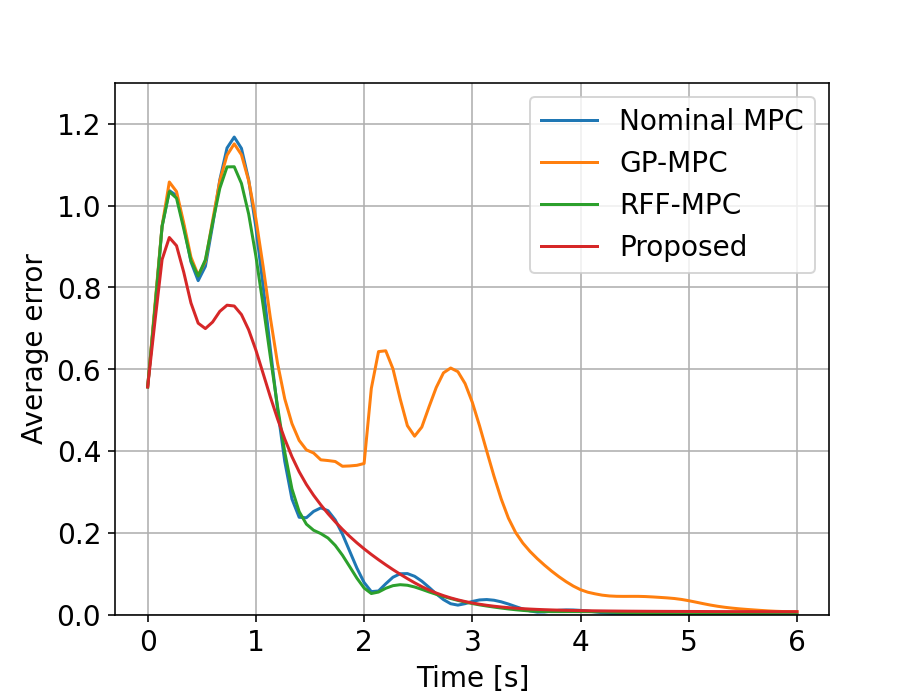}
        }
        \subfigure[30 \% difference.]{
            \includegraphics[width=0.3\linewidth]{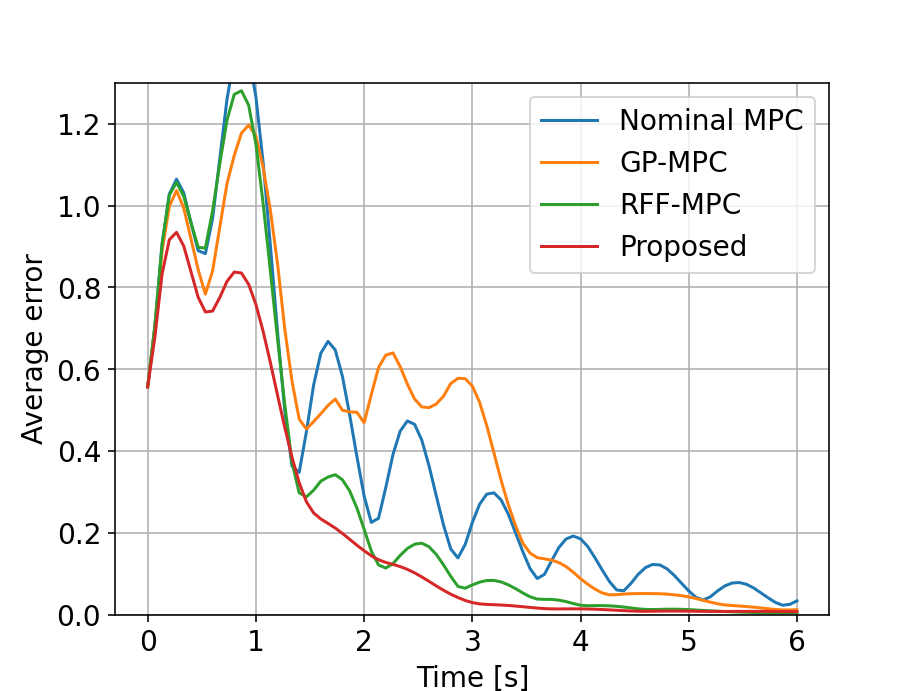}
        }
        \caption{Average errors with various extents of discrepancy between the nominal model and the true dynamics.}
        \label{fig. sensitivity study}
\end{figure}

The results are shown in Fig. \ref{fig. sample traj and error}, where a sample trajectory and the average error defined by $\frac{1}{10}\sum_{i=1}^{10} \| x_{k,i}-x^\text{ref}_k \|_2$ ($x_{k,i}$: state of the $i$-th trajectory) for each method are on the left and right panels, respectively. 
Since nominal MPC does not take the effect of the residual dynamics into account, it does not  track the reference within the simulation window of six seconds. 
On the other hand, all the adaptive MPC methods successfully stabilize the states by the end of the simulations.
Compared to GP- and RFF-MPC, the proposed adaptive Koopman MPC outperforms in terms of the average errors.
Table \ref{table. execution times} shows the average execution times of the simulation for individual controllers. 
Since GP- and RFF-MPC are adaptive methods based on nonlinear MPC, they take longer than the nominal MPC.
On the other hand, the proposed adaptive Koopman MPC achieves an even shorter execution time than the nominal MPC thanks to the convexity of its formulation.

Finally, a sensitivity analysis w.r.t. the residual dynamics is performed, where we consider various extents of the discrepancy between the nominal model and the true dynamics.
Figure \ref{fig. sensitivity study} shows the results, where the true dynamics parameters are varied by 10 to 30 \% w.r.t. the nominal model and the same experiment is performed for each case. 
Whereas the proposed method results in higher average errors after $t=1.5$ [s] for relatively small extents of the residual dynamics (Figs. \ref{fig. sensitivity study}(a,b)), it outperforms the other controllers at the beginning of the simulation in all cases.
Also, the proposed method shows the most robust performance across the given range of residual dynamics.

\begin{table}[]
    \centering
    \begin{tabular}{c|cccc}
         Method & Nominal MPC & GP-MPC & RFF-MPC & Proposed 
         \\ \hline 
         Average execution time [s] & 0.70 & 5.11 & 1.12 & 0.52
    \end{tabular}
    \caption{Average execution times of the MPC simulation.}
    \label{table. execution times}
\end{table}

\vspace{-4mm}
\section{Conclusion}
\vspace{-3mm}
%This work presented an adaptive Koopman operator-based MPC for nonlinear dynamics. Whereas MPC is a popular control strategy in a wide range of problems, the controller performance may  degrade if there exists a discrepancy between the control model and the true dynamics. We utilize the Koopman operator framework to formulate an MPC with a linear embedding model, whose model parameters are updated online to compensate for this model discrepancy. The proposed method is not only amenable to data-driven or adaptive controller designs targeting possibly nonlinear dynamics, but also computationally efficient compared to nonlinear MPC methods because of the linearity of the control model. 

This work introduced an adaptive Model Predictive Control (MPC) framework that leverages Koopman operators for nonlinear dynamics. While MPC is widely adopted across diverse control applications, its performance can deteriorate when the control model fails to accurately capture the true system dynamics. To address this challenge, we developed an approach that combines the Koopman operator framework with a linear embedding model, enabling online parameter updates to compensate for model discrepancies. The offline training learns the features and operator matrices jointly, allowing for greater expressivity. By maintaining linearity in the control model while accommodating nonlinear dynamics, our method achieves both computational efficiency and adaptability.
Online learning may become unstable if the model is parameterized by complex model forms such as neural networks, which result in a high-dimensional non-convex optimization. 
The proposed method uses the soft update of target networks so that abrupt changes in the model will be avoided and we can stabilize online updates. 
Also, we provide a discussion on which model parameters to prioritize for the online update based on a specific system description of linear embedding models in \cite{Koopman_form}. 
Experimental validation on a cartpole system demonstrates that our method achieves favorable control performance while requiring significantly lower computational resources compared to existing adaptive MPC approaches. Furthermore, the results reveal enhanced robustness to residual dynamics. While our initial results demonstrate  promise and basic viability, expanding the validation to more complex industrial and robotic systems represents an important next step in establishing this framework's broad applicability. 

\acks{
This work has benefited from several discussions and code-sharing with Hongyu Zhou and Vasileios Tzoumas, for which we are grateful. This work was funded by AFOSR grant FA9550-17-1-0195.
}

\bibliography{l4dc_2025}  
\end{document}